\newcommand{\Mod}[1]{\ (\text{mod}\ #1)}
\renewcommand{\Mod}[1]{{\ifmmode\text{\rm\ (mod~$#1$)}\else\discretionary{}{}{\hbox{ }}\rm(mod~$#1$)\fi}}
\newtheorem{theorem}{Theorem}[section]
\newtheorem{cor}[theorem]{Corollary}
\newtheorem{conjecture}[theorem]{Conjecture}
\theoremstyle{definition} 
\newtheorem{defn}[theorem]{Definition}
\numberwithin{theorem}{section}
\title{Many unit distances requires many directions}
\author{Gabriel Currier}
\address{Department of Mathematics \\ University of British Columbia \\ Vancouver, BC, Canada}
\email{currierg@math.ubc.ca}
 \author{J\'{o}zsef Solymosi}
\address{Department of Mathematics \\ University of British Columbia \\ Vancouver, BC, Canada \\ and Obuda University, Budapest, Hungary}
\email{solymosi@math.ubc.ca}
\begin{document}

\begin{abstract}
    In this note, we show that in planar pointsets determining many unit distances, these unit distances must span many directions. Specifically, we show that a set of $n$ points can determine only $o(n^{4/3})$ unit distances from a set of at most $O(n^{1/3})$ directions.
\end{abstract}

\maketitle

\section{Introduction}

Given a set $P$ of points in $\mathbb{R}^2$, we refer to the number of pairs $(p,p') \in P$ with $||p - p'|| = 1$ as the number of \emph{unit distances determined by $P$}. In $1946$, Erd\H os made the following (now well-known) conjecture, which has become known as the Erd\H{o}s unit distance conjecture \cite{ERD}.

\begin{conjecture}[Erd\H os, 1946]\label{unit}
    Any collection of $n$ points in the plane determines at most $n^{1 +o(1)}$\footnote{See the beginning of section \ref{prelim} for an introduction to big-O notation.} unit distances. 
    
\end{conjecture}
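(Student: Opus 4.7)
The plan is to attack the conjecture by exploiting the rigidity of unit circles (all congruent), the one piece of structure that the classical Spencer-Szemer\'edi-Trotter bound of $O(n^{4/3})$ ignores. I would first reorganize unit distances by direction: the total count equals $\sum_{v} |P \cap (P-v)|$ summed over unit vectors $v$ realized as differences of points in $P$. This recasts the problem as one about translation coincidences, which is more susceptible to tools from additive combinatorics than a raw incidence count.

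The natural strategy is a dichotomy driven by the number of directions in play. If few directions appear, the paper's own main theorem (``many unit distances requires many directions'') already forces an $o(n^{4/3})$ bound, handling one regime. If many directions appear, the challenge is to show that the contributions are well-distributed enough to extract a polynomial improvement. I would attempt to iterate the directional argument: once one knows that a pointset with roughly $n^{4/3 - \epsilon}$ unit distances uses $\omega(n^{1/3})$ directions, the typical direction carries only $O(n)$ translation coincidences, and one might hope to analyze direction-by-direction using one-dimensional additive structure, tying each heavy direction to an arithmetic progression or generalized AP inside $P$.

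The main obstacle, and the reason this conjecture has stood for nearly eighty years, is the Erd\H{o}s lattice example: a $\sqrt{n} \times \sqrt{n}$ section of the integer lattice realizes $n^{1+c/\log\log n}$ unit distances while using $\Theta(n)$ directions, so directional abundance by itself cannot force a near-linear bound. A successful proof would need a sharp dichotomy distinguishing near-lattice configurations (where Erd\H{o}s's own divisor-bound argument gives the conjectured exponent) from ``generic'' configurations (where polynomial partitioning, algebraic constraints on unit circles, or sum-product type estimates could beat Szemer\'edi-Trotter). Identifying and proving such a dichotomy is where I expect any realistic approach to stall. The present paper can be read as supplying one necessary ingredient of any such scheme, namely the coupling between distance count and direction count, and any attempt I would make would treat this theorem as the base case of an inductive or recursive procedure rather than as a standalone step.
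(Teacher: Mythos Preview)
The statement you were asked to prove is a \emph{conjecture}, not a theorem: the paper does not prove it and, as you yourself note, it has been open since 1946. There is consequently no proof in the paper to compare your proposal against. The paper's contribution is Theorem~\ref{mainthm}, a partial result in the direction of the conjecture, and the conjecture is stated only as motivation.

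Your proposal is candid about this. You do not claim a proof; you sketch a dichotomy strategy (few directions versus many directions), correctly identify the paper's main theorem as the tool that handles the few-directions regime, and then explicitly flag the integer-lattice construction as the obstruction in the many-directions regime. That assessment is accurate: the $\sqrt{n}\times\sqrt{n}$ grid uses $\Theta(n)$ directions and already achieves $n^{1+c/\log\log n}$ unit distances, so no argument based purely on direction count can push below that threshold, and any purported proof must at minimum recover the divisor-bound behaviour on lattice-like sets while doing something genuinely new elsewhere. Your suggestion to iterate the directional argument or pass to per-direction additive structure is reasonable as a heuristic but, as you say, there is no known way to close the gap. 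In short, your write-up is an honest and well-informed discussion of the problem's status rather than a proof, and that is the correct posture here since no proof exists.
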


Despite decades of effort, upper and lower bounds on this problem remain quite far apart. The best upper bound, due to Spencer, Szemer\'{e}di and Trotter from $1984$ \cite{SST}, is that the number of unit distances is at most $O(n^{4/3})$. This bound applies to the analogous problem in any norm, and it is tight for some norms; see the discussion below. For the euclidean norm, however, the best-known lower bound is $n^{1 + \frac{c}{\log \log n}}$ unit distances, achieved by an appropriate scaling of a $\sqrt{n} \times \sqrt{n}$ integer grid (see e.g. \cite{MAT}, section $4.2$). This construction dates to the original paper of Erd\H{o}s from 1946 \cite{ERD}.

Despite the lack of progress in the general problem, there have been a number of improvements in specific cases and in closely related problems. One important class of examples are results where we consider only unit distances from some restricted direction set. As an example, Schwartz, Solymosi and de Zeeuw \cite{SSD1} showed that the number of unit distances, where we consider only those coming from rational angles, is at most $n^{1 + 6/\sqrt{\log n}}$. Building on this, Schwartz \cite{SCH} used the subspace theorem to show that the number of unit distances coming from a low rank multiplicative group is at most $n^{1+\epsilon},$ for any $\epsilon >0$ and $n > n_0(\epsilon)$.

Our main theorem here gives another result of this type. Essentially, it says that the number of unit distances from any ``small'' set of directions must be $o(n^{4/3})$. Note that, in contrast to previous results, we make no assumptions on the structure of our direction set, but only on its size.

\begin{theorem}\label{mainthm}
    Let $P$ be a collection of $n$ points, and $D$ a collection of $O(n^{1/3})$ unit vectors. Then, $P$ can determine at most $o(n^{4/3})$ unit distances from vectors in $D$.
\end{theorem}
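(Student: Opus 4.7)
My plan is to count ordered paths of length two in the ``restricted'' unit-distance graph on $P$ and exploit a geometric bound on representations of vectors as sums of unit vectors.

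Set $m(v) = |\{(p,q)\in P^2 : q-p=v\}|$ so that $E = \sum_{v\in D} m(v)$ is the quantity to bound, and (replacing $D$ by $D\cup(-D)$ if necessary) assume $D$ is symmetric. Let $G$ be the directed graph on $P$ with edge $p\to p+v$ whenever $v\in D$ and $p+v\in P$, and let $N$ be the number of ordered length-2 paths in $G$. The first step is the Cauchy--Schwarz lower bound $N \geq E^2/n$ applied to the degree sequence of $G$.

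For the upper bound on $N$, observe that each length-2 path $(p_1,p_2,p_3)$ gives a decomposition $p_3-p_1 = v_1 + v_2$ with $v_1,v_2\in D$. The key geometric observation is that for any $w\neq 0$, the representation function $s(w) := |\{(v_1,v_2)\in D^2 : v_1 + v_2 = w\}|$ is at most $2$: if $v_1+v_2 = w$ and $|v_1|=|v_2|=1$, then $v_1\cdot w = |w|^2/2$, so $v_1$ lies on a specific line whose intersection with the unit circle has at most two points. Setting $r(w) = |\{(p,q)\in P^2 : q-p=w\}|$, we obtain
\[
N \;\leq\; \sum_w r(w)\,s(w) \;\leq\; n|D| + 2T, \qquad T := \sum_{w\in (D+D)\setminus\{0\}} r(w).
\]
Combining, $E^2/n \leq n|D| + 2T$, and since $n|D|=O(n^{4/3})=o(n^{5/3})$, the desired $E = o(n^{4/3})$ will follow from $T = o(n^{5/3})$.

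The heart of the proof is thus the bound $T = o(n^{5/3})$. The trivial estimate $T \leq |D+D|\cdot n \leq |D|^2 n = O(n^{5/3})$ matches the required exponent exactly, so the $o(1)$ improvement must come from incidence geometry. I would apply Szemer\'edi--Trotter to the point--line incidences between $P$ and the lines through at least two points of $P$ whose directions lie in the set of at most $|D|^2 = O(n^{2/3})$ directions determined by $D+D$. Each contribution to $T$ corresponds to a pair of collinear points in such a direction, and one can control $r(w)$ line-by-line via a dyadic decomposition of lines by their point-multiplicity, combining ST's nonlinear $n^{2/3}m^{2/3}$ term with the trivial per-direction count. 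The main obstacle is precisely this final step: the trivial bound on $T$ is already sharp in exponent, so the improvement must be sub-polynomial, and the delicate case is when $P$ has strong additive structure along directions of $D+D$ so that $r(w)$ is close to $n$ for many $w$. Handling this likely requires a dichotomy between additively-structured and generic $P$, or an iteration of the incidence estimate.
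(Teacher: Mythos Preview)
Your reduction is clean and correct up to the point where you need $T = o(n^{5/3})$, but that claim is the entire content of the theorem, and you have not proved it. You yourself flag this: the trivial bound $T \le |D+D|\cdot n = O(n^{5/3})$ matches the target exponent exactly, so any improvement must be sub-polynomial. Szemer\'edi--Trotter is the wrong tool here: it produces power savings when it applies, whereas what you need is a saving of the form $n^{o(1)}$. Moreover, $r(w)$ counts pairs at a \emph{specific} displacement $w$, not merely collinear pairs in a given direction, so the point--line incidence setup you describe does not directly control $T$.

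More seriously, your reduction discards the one feature that makes the problem tractable. The quantity $T$ depends only on $P$ and on $D+D$ as an abstract set of at most $n^{2/3}$ vectors in the disk of radius~$2$; the fact that $D$ sits on the unit circle has been thrown away. If $P$ is (say) a dense subset of a two-dimensional generalised arithmetic progression, there is no purely additive obstruction to having $\Omega(n^{2/3})$ elements of $D+D$ be popular differences of $P$, which would give $T = \Omega(n^{5/3})$. The only thing ruling this out is the \emph{multiplicative} constraint that the summands are unit vectors, and your framework has no way to use it. The paper's proof attacks precisely this point: it uses polynomial partitioning and a crossing-paths argument to show that $\Omega(n^{4/3})$ restricted unit distances force, via Balog--Szemer\'edi--Gowers and Freiman--Ruzsa, a large subset of $P$ inside a bounded-dimensional GAP; the contradiction then comes from Chang's theorem, which says a GAP can meet a circle in only $n^{o(1)}$ points. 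That last step is exactly where the sub-polynomial gain comes from, and it has no analogue in your outline.

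So the ``dichotomy between additively-structured and generic $P$'' you allude to is not a minor cleanup --- it is the whole proof, and resolving the structured case requires the circle/GAP interaction you have eliminated.
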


As mentioned previously, there have long been known examples of norms and sets of $n$ points where $\Omega (n^{4/3})$ unit distances actually can be achieved (see for example \cite{Jarnik1926,VAL}); one of the motivations for theorem \ref{mainthm} is that, until quite recently, all such examples came from a direction set of size $O(n^{1/3})$. Very recently, however, examples \cite{SSz} were found from larger direction sets. More generally, it's known that most norms, in the sense of Baire category, can produce pointsets with only a slightly superlinear number of unit distances. This was shown originally by Matou\v{s}ek \cite{MA}, where he showed that most norms can determine only $O(n \log n \log \log n)$ unit distances. The bound was improved recently by Alon, Buci\'{c} and Sauermann to $O(n \log n)$ in \cite{ABS}.

Our proof will use a variety of tools from incidence geometry and extremal/additive combinatorics. In particular, we will use Guth-Katz polynomial partitioning, the Balog-Szemer\'edi-Gowers theorem, the Freiman-Ruzsa theorem, and a result of Mei-Chu Chang about factorizations of complex numbers coming from generalized arithmetic progressions. There is some overlap of the techniques used here with a previous paper of the authors and White \cite{CSW}. However, the final results of that paper were not as strong, and the proof presented here is simpler and more self-contained. In particular, we avoid using the Szemer\'edi regularity lemma. 

\section{Preliminaries}\label{prelim}

In this section, we will discuss the major tools that we will use in our proof. With the exception of Theorem \ref{MCCfac} and Corollary \ref{circpoints}, these tools are reasonably standard in extremal and additive combinatorics, and the interested reader can skip to Section \ref{mainproof}. To start, we give a brief overview of big-O notation.

\begin{defn}
Let $f(n), g(n)$ be functions of a positive integer $n$. We say that $f(n) = O(g(n))$ if there exists a constant $C$ such that $f(n) \le C g(n)$ for all $n$. We say that $f(n) = \Omega(g(n))$ if $g(n) = O(f(n))$, and that $f(n) = \Theta(g(n))$ if $f(n) = O(g(n))$ and $f(n) = \Omega(g(n))$. Finally, we say that $f(n) = o(g(n))$ if $\lim_{n \to \infty} \frac{f(n)}{g(n)} =0 $. In general, for this paper, we will often allow our constant to depend on other parameters; we will generally include a subscript or specify when this is the case.

\end{defn}

The first tool we will need is the polynomial partitioning theorem of Guth and Katz \cite{GuthKatz15}.

\begin{theorem}\label{polypart}
    Let $P$ be a collection of $n$ points in the plane, and let $d$ be a positive integer. Then, there exists a polynomial $F$ of degree at most $d$ such that there are $O( d^2)$ cells determined by $Z(F)$, and each cell contains  $O(n/d^2)$ points from $P$.
\end{theorem}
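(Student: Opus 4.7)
The plan is to build $F$ by a repeated bisection argument, using a polynomial ham sandwich theorem as the engine and the Milnor--Thom cell count as the finisher. The idea is that, at stage $j$, I will maintain a collection of roughly $2^{j-1}$ subsets of $P$, each of size at most $n/2^{j-1}$, and find a single low-degree polynomial that simultaneously bisects every one of them; the product of these polynomials over all stages is $F$.

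The first step is a polynomial ham sandwich lemma: given $N \le \binom{r+2}{2} - 1$ finite point sets $S_1, \ldots, S_N \subset \mathbb{R}^2$, there is a nonzero polynomial $H$ of degree at most $r$ such that each of $\{H>0\}$ and $\{H<0\}$ contains at most $|S_i|/2$ points of $S_i$. I would prove this by applying the classical ham sandwich theorem in $\mathbb{R}^{\binom{r+2}{2}-1}$ via the Veronese embedding $\phi(x,y) = (x, y, x^2, xy, y^2, \ldots)$ listing all nonconstant monomials of degree at most $r$; polynomials of degree $\le r$ in $(x,y)$ correspond to affine hyperplanes in the target, and classical ham sandwich bisects any $N \le \binom{r+2}{2}-1$ sets simultaneously. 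In particular I may take $r = O(\sqrt{N})$.

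For the iteration, let $J$ be the smallest integer with $2^{J/2} \ge d/c$ for a suitable constant $c$. At stage $j = 1, \ldots, J$ I have at most $2^{j-1}$ surviving subsets of size at most $n/2^{j-1}$, and I apply the lemma with $r_j = O(2^{(j-1)/2})$ to produce $H_j$ bisecting all of them; I split each subset along $\{H_j > 0\}$ and $\{H_j < 0\}$, assigning points on $Z(H_j)$ to either side arbitrarily. Set $F = H_1 \cdots H_J$. Then $\deg F \le \sum_j r_j = O(2^{J/2}) = O(d)$, and after stage $J$ there are $2^J = O(d^2)$ subsets of size $O(n/d^2)$, each contained in a single connected component of $\mathbb{R}^2 \setminus Z(F)$ by construction. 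Finally, the Milnor--Thom theorem (or Harnack's bound after a generic perturbation of $F$) gives $O(d^2)$ as an upper bound on the number of connected components of $\mathbb{R}^2 \setminus Z(F)$, matching what we need.

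The main obstacle is the bookkeeping in the iteration: the degrees $r_j$ must grow geometrically so that $\sum r_j$ telescopes to $O(d)$ rather than, say, $O(d \log d)$, and one has to verify that the polynomial ham sandwich guarantees bisection up to the points on the zero set, which can be absorbed harmlessly into either half. The Veronese reduction and the final Milnor--Thom cell count are standard once the framework is set up correctly.
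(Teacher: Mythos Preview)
The paper does not actually prove Theorem~\ref{polypart}; it is quoted in the preliminaries as a black-box tool with a citation to Guth--Katz \cite{GuthKatz15}, and no argument is supplied. Your sketch is essentially the original Guth--Katz proof (iterated polynomial ham sandwich via the Veronese map, geometric summation of the degrees $r_j$, and a Milnor--Thom/Harnack bound for the cell count), and it is correct.

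One small wording correction: it is not true that each of your $2^J$ subsets lies in a single connected component of $\mathbb{R}^2 \setminus Z(F)$; a subset may be spread over several components sharing the same sign pattern, and may contain points on $Z(F)$. What you actually need, and what your construction gives, is the reverse inclusion: every connected component of $\mathbb{R}^2 \setminus Z(F)$ has a fixed sign vector $(\operatorname{sgn} H_1,\ldots,\operatorname{sgn} H_J)$, so the points of $P$ it contains all belong to the single subset with that sign pattern, and hence number at most $n/2^J = O(n/d^2)$. With that swap the argument goes through.
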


Additionally, the following basic result of algebraic geometry, known as B\'ezout's theorem, will help us make full use of Theorem \ref{polypart}

\begin{theorem}\label{bezout}
    Let $F_1,F_2$ be polynomials in two variables of degree $d_1,d_2$, respectively. If $Z(F_1), Z(F_2)$ share no common components, then they have at most $d_1d_2$ points of intersection.
\end{theorem}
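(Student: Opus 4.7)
The plan is to use resultants combined with a generic linear change of coordinates. The hypothesis that $Z(F_1)$ and $Z(F_2)$ share no common component translates, via unique factorization in $\mathbb{C}[x,y]$, into the statement that $F_1$ and $F_2$ share no nonconstant polynomial factor. In particular, the intersection $Z(F_1) \cap Z(F_2)$ contains no algebraic curve and is therefore a finite set. I would begin by applying a generic rotation, which preserves the degrees $d_1, d_2$ and the coprimality of $F_1, F_2$, so that the finitely many intersection points have pairwise distinct $x$-coordinates and so that the leading coefficients of $F_1$ and $F_2$, viewed as polynomials in $y$, are nonzero constants.

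Next, I would view $F_1, F_2$ as elements of $\mathbb{C}[x][y]$ and form the resultant $R(x) = \mathrm{Res}_y(F_1, F_2) \in \mathbb{C}[x]$, defined as the determinant of the associated Sylvester matrix. A direct degree count gives $\deg R \le d_1 d_2$, and by Gauss's lemma, coprimality of $F_1, F_2$ in $\mathbb{C}[x,y]$ lifts to coprimality in $\mathbb{C}(x)[y]$, so $R$ is not identically zero. The defining property of the resultant then says that $R(a) = 0$ whenever $F_1(a,y)$ and $F_2(a,y)$ share a common root in $y$. Consequently, every $x$-coordinate occurring in $Z(F_1) \cap Z(F_2)$ is a root of $R$, which leaves at most $d_1 d_2$ such coordinates; by our choice of coordinates, this gives at most $d_1 d_2$ intersection points. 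Since every real common zero is a complex common zero, the bound transfers immediately to the real setting relevant to the rest of the paper.

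The main technical obstacle is justifying the ``generic rotation'' step cleanly: one must verify that almost every rotation simultaneously separates the finitely many $x$-coordinates of intersection points and keeps the top-degree forms of $F_1$ and $F_2$ from having a common zero at infinity. Each of these conditions fails only on a proper Zariski-closed subset of the rotation parameter, so a suitable rotation exists, and the resultant argument then yields the stated bound. A secondary subtlety worth checking is the degenerate case where one of $F_1, F_2$ is itself reducible; this causes no issue, since coprimality of the full polynomials is all that the resultant argument requires.
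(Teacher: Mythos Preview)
Your resultant-based argument is correct and is one of the standard proofs of B\'ezout's theorem. However, the paper does not actually prove this statement: Theorem~\ref{bezout} is quoted as ``the following basic result of algebraic geometry, known as B\'ezout's theorem'' and is used as a black box, with no proof given. So there is nothing in the paper to compare your argument against.

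That said, your write-up is sound. The reduction to coprime $F_1,F_2$ via unique factorization in $\mathbb{C}[x,y]$ is the right translation of ``no common components''; the generic rotation to separate $x$-coordinates and to force the $y$-leading coefficients to be nonzero constants is the standard maneuver (and your remark that each bad condition cuts out a proper Zariski-closed set of rotation angles is the right justification); and the degree bound $\deg \mathrm{Res}_y(F_1,F_2)\le d_1d_2$ together with nonvanishing via Gauss's lemma finishes things. The only place one might ask for a line more of detail is the degree bound on the resultant itself, which follows from the weighted structure of the Sylvester determinant (the coefficient of $y^k$ in $F_i$ has $x$-degree at most $d_i-k$), but this is routine. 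Nothing is missing.
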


Next, we will need the following basic result from extremal graph theory.

\begin{theorem}\label{KST}
    Let $G$ be a graph on $n$ vertices with $ \ge c n^2$ edges, and let $s,t$ be positive integers. Then $G$ contains $\Omega_c( n^{s+t})$ copies of $K_{s,t}$.
\end{theorem}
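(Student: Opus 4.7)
The plan is a standard double application of Jensen's inequality, using convexity of the binomial function $\binom{x}{k}$ on the non-negative integers. First, for each vertex $v$, the number of $K_{1,t}$'s centered at $v$ is $\binom{d(v)}{t}$, where $d(v)$ is the degree. Summing over $v$ and applying Jensen's inequality to the convex function $x \mapsto \binom{x}{t}$, we obtain
\[
\sum_{v \in V(G)} \binom{d(v)}{t} \;\geq\; n \binom{\bar d}{t},
\]
where $\bar d = 2|E(G)|/n \geq 2cn$ is the average degree. For $n$ large compared to $t$, this is $\Omega_{c,t}(n^{t+1})$.

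Next, I would reinterpret this sum by switching the order of summation. A $K_{1,t}$ is the same as a pair $(v, T)$ with $T \subseteq N(v)$ and $|T| = t$, which is also the same as a pair $(T, v)$ with $v \in N(T) := \bigcap_{u \in T} N(u)$. So
\[
\sum_{T \in \binom{V}{t}} |N(T)| \;=\; \sum_{v \in V(G)} \binom{d(v)}{t} \;=\; \Omega_{c,t}(n^{t+1}).
\]
Now apply Jensen's inequality a second time, to the convex function $x \mapsto \binom{x}{s}$, over the $\binom{n}{t}$ many $t$-subsets. The number of $K_{s,t}$'s in $G$ (with the $t$-side distinguished) is exactly $\sum_T \binom{|N(T)|}{s}$, and Jensen gives
\[
\sum_{T \in \binom{V}{t}} \binom{|N(T)|}{s} \;\geq\; \binom{n}{t} \binom{\tfrac{1}{\binom{n}{t}} \sum_{T} |N(T)|}{s} \;\geq\; \binom{n}{t} \binom{c' n}{s} \;=\; \Omega_{c,s,t}(n^{s+t}),
\]
for an appropriate constant $c' > 0$. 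Since each such term counts an honest $K_{s,t}$ (the set $N(T)$ is disjoint from $T$ as $G$ has no loops), this gives the claimed $\Omega_c(n^{s+t})$ copies of $K_{s,t}$.

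The only technical point to check carefully is the convexity of $\binom{x}{k}$ on the non-negative integers, which follows from the identity $\binom{d+1}{k} - \binom{d}{k} = \binom{d}{k-1}$ being non-decreasing in $d$; this is equivalent to convexity for integer-valued functions. Beyond that, the argument is a clean two-step Jensen computation, and I do not expect any serious obstacle: the main thing to watch is just ensuring that $n$ is large enough (depending on $s,t,c$) for the binomial coefficients $\binom{2cn}{t}$ and $\binom{c'n}{s}$ to behave like the corresponding monomials in $n$.
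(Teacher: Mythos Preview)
The paper does not actually prove this theorem; it is stated in Section~\ref{prelim} as a standard tool from extremal graph theory and used as a black box in the proof of Claim~3. Your argument is correct and is exactly the classical double-counting/Jensen proof of this supersaturation result, so there is nothing to compare.

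One very minor technical remark: you justify convexity of $x\mapsto\binom{x}{t}$ only on the non-negative integers, but to apply Jensen with the (generally non-integer) average $\bar d$ you need the function defined and convex on an interval of reals. The standard fix is to extend $\binom{x}{t}$ to $\max\bigl(0,\tfrac{x(x-1)\cdots(x-t+1)}{t!}\bigr)$ on $[0,\infty)$, which is convex; alternatively, one can argue that the minimum of $\sum_v\binom{d(v)}{t}$ over integer degree sequences with fixed sum is attained when all degrees differ by at most one. Either way the bound $\Omega_{c,t}(n^{t+1})$ follows, and the rest of your argument goes through unchanged.
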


Finally, we will use some classical results from additive combinatorics. We give a couple of quick definitions to state them.

\begin{defn}
    Let $A$ be a subset of an abelian group, and $H \subset A \times A$. Then, we define the \emph{sum set} $A+_HA$ to be $$A+_HA :=\{a + a' : (a,a') \in H\}$$ and the \emph{difference set}  $A-_HA$ similarly. If the $H$ is omitted, it is assumed that $H = A \times A$.
\end{defn}

Our first tool from additive combinatorics, a version of the Balog-Szemer\'edi-Gowers theorem, says that if some subset of an abelian group generates many ``additive coincidences,'' then it must have a large subset with small difference set. 

\begin{theorem}[Balog-Szemer\' edi-Gowers \cite{BSG1,BSG2}]\label{bsg}
Suppose $G$ is an abelian group, $A \subset G$ is finite, and let $H \subset A \times A$ be such that $H \ge c|A^2|$ and $|A -_H A| \le c'|A|$. Then, there must exist $A' \subset A$ such that $|A'| = \Omega_{c,c'}( |A|)$ and $|A' - A'| = O_{c,c'} (|A'|)$. 
\end{theorem}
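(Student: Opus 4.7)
The plan is to interpret $H$ graph-theoretically, extract a structured subset via dependent random choice, and then finish with a short counting argument that uses the smallness of $A -_H A$.

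First, view $H$ as the edge set of a graph $\Gamma$ on vertex set $A$, which has at least $c|A|^2$ (ordered) edges. Set $K := A -_H A$, so $|K| \le c'|A|$ and each edge $(a,a') \in H$ contributes the element $a - a' \in K$.

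The main technical step is to produce $A' \subseteq A$ with $|A'| = \Omega_c(|A|)$ such that every pair $(a,b) \in A' \times A'$ has at least $M = \Omega_{c,c'}(|A|)$ common neighbors in $\Gamma$, meaning vertices $x \in A$ for which both $(a,x),(b,x) \in H$. A simple Cauchy--Schwarz computation shows that the average pair has at least $c^2 |A|$ common neighbors; the difficulty is upgrading this averaged statement to a uniform one on a large set. I would use dependent random choice: pick a tuple $(u_1,\dots,u_t)$ of vertices of $A$ independently and uniformly, let $A_0 := N(u_1) \cap \cdots \cap N(u_t)$, and observe via Jensen that $\mathbb{E}[|A_0|] \ge (2c)^t |A|$. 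On the other hand, any pair $(a,b)$ with fewer than $\delta|A|$ common neighbors lies in $A_0 \times A_0$ with probability at most $\delta^t$, so the expected number of such "bad" pairs inside $A_0$ is at most $\delta^t |A|^2$. Choosing $\delta$ small and $t$ large in terms of $c$ alone, we can arrange that bad pairs are dominated by $\mathbb{E}[|A_0|]$; deleting one vertex from each bad pair in $A_0$ yields the desired $A'$.

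Once $A'$ is in hand, the bound on $|A' - A'|$ follows by a double count. For each $d \in A' - A'$, fix a realization $(a,b) \in A' \times A'$ with $a - b = d$. Each of the $\ge M$ common neighbors $x$ gives a representation
\[
d \;=\; (a - x) - (b - x) \;=\; k_1 - k_2, \qquad k_1 = a-x \in K, \ k_2 = b-x \in K,
\]
and distinct $x$ produce distinct pairs $(k_1,k_2) \in K \times K$, since $x = a - k_1$ is determined by $k_1$. Thus for every $d \in A' - A'$, the number of representations as a difference of elements of $K$ is at least $M$, and summing over $d$ gives
\[
|A' - A'| \cdot M \;\le\; |K \times K| \;\le\; (c')^2 |A|^2,
\]
so $|A' - A'| = O_{c,c'}(|A|) = O_{c,c'}(|A'|)$, as required.

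The main obstacle is the extraction of $A'$ with the uniform common-neighbor property: a naive Cauchy--Schwarz only delivers this on average, and one needs the dependent-random-choice/pruning argument to obtain it for \emph{every} pair in a linear-sized subset. Once this graph-theoretic step is in place, the additive portion is essentially one line of counting.
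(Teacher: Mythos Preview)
The paper does not prove Theorem~\ref{bsg}; it is quoted from \cite{BSG1,BSG2} as a black box. So there is no ``paper's proof'' to compare against, and I will simply assess your argument on its own.

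Your overall two-step strategy --- a graph-theoretic extraction of $A'$ followed by the double count in $K\times K$ --- is exactly the standard route, and the final counting paragraph is correct. The gap is in the dependent random choice step. You assert that $\delta$ and $t$ can be chosen in terms of $c$ alone so that the expected number of bad pairs in $A_0$ is dominated by $\mathbb{E}[|A_0|]$. But $\mathbb{E}[|A_0|]\ge c^{t}|A|$ while the expected number of bad pairs is at most $\delta^{t}|A|^{2}$, so you need $(\delta/c)^{t}\le 1/(2|A|)$, which forces $t\gtrsim \log|A|$. With $t$ of that order, $c^{t}|A|$ is only $|A|^{1-\varepsilon(c)}$, and after deleting a vertex from each bad pair you are left with a sublinear set, not one of size $\Omega_{c}(|A|)$. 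In short, naive DRC cannot deliver a linear-sized $A'$ in which \emph{every} pair has $\Omega_c(|A|)$ common neighbours.

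The standard repair is to weaken the target: instead of common neighbours (paths of length~$2$), one arranges that every pair in $A'$ is joined by $\Omega_c(|A|^{3})$ paths of length four in $\Gamma$. This \emph{is} attainable on a linear-sized $A'$ with essentially $t=1$: take $A_0=N(u)$ for a suitable $u$ so that only an $\varepsilon$-fraction of pairs in $A_0$ are bad, and let $A'\subseteq A_0$ consist of vertices lying in few bad pairs. For any $a,a'\in A'$ there are then $\Omega_c(|A|)$ intermediate $w\in A_0$ with $(a,w)$ and $(a',w)$ both good, and each such $w$ yields $\Omega_c(|A|^{2})$ paths $a\text{--}x\text{--}w\text{--}y\text{--}a'$. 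Your double count then runs over $K^{4}$ instead of $K^{2}$, giving $|A'-A'|\cdot\Omega_c(|A|^{3})\le |K|^{4}\le (c')^{4}|A|^{4}$, which is the desired bound.
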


The second of these results from additive combinatorics is known as the Freiman-Ruzsa Theorem. In order to state this theorem, we have to introduce the notion of a generalized arithmetic progression.

\begin{defn}
    Let $v_0,\dots,v_d$ be elements of an abelian group, and $\ell_1,\dots,\ell_d$ positive integers. Then, the set $$\{a_0 + \sum_{i=1}^d a_iv_i : 0 \le v_i \le \ell_i -1 \text{ for all } 1 \le i \le d\}$$ is known as a \emph{generalized arithmetic progression} (GAP) of dimension $d$ and size $\prod_i \ell_i$.
\end{defn}

It is not hard to see that a generalized arithmetic progression will generally have small sum and difference sets. The Freiman-Ruzsa theorem  \cite{FREIMAN,RUZSA}, in turn, says that a subset of an abelian group that has small sumset (or difference set) must be contained in a generalized arithmetic progression of bounded size and dimension. The following is the version of the theorem that will be most useful for our purposes.

\begin{theorem}[Freiman-Ruzsa, \cite{FREIMAN,RUZSA}]\label{freiman}
Let $G$ be a torsion-free abelian group, and $A \subset G$ of size $n$. Suppose furthermore that $|A-A| \le cn$. Then, $A$ is contained in a generalized arithmetic progression of dimension $O_c(1)$ and size $O_c( n)$.
\end{theorem}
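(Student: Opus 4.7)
The plan is to prove Freiman--Ruzsa via the classical Ruzsa route: apply Pl\"unnecke--Ruzsa sumset bounds, transfer the problem into a finite cyclic group by a Freiman isomorphism, use Bogolyubov's lemma to locate a Bohr set inside a small iterated difference set, extract a proper generalized arithmetic progression (GAP) from that Bohr set via the geometry of numbers, and finish with Ruzsa's covering lemma. Since $G$ is torsion-free and $A$ is finite, the subgroup of $G$ generated by $A$ is isomorphic to $\Z^r$ for some $r$, and a generic linear projection preserves all sums and differences of the finitely many elements we will care about, so I may assume throughout that $A \subset \Z$.

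First I would apply the Pl\"unnecke--Ruzsa inequality to $|A-A| \le cn$ to deduce $|kA - \ell A| \le K(c,k,\ell)\,n$ for any fixed $k,\ell$; in particular $|2A-2A| \le Kn$ for some $K = K(c)$. Next, using a random dilation modulo a prime $N$ of size $O_c(n)$, I would construct a Freiman $2$-isomorphism $\phi : A \to A' \subset \Z/N\Z$, i.e., a bijection identifying the sumset $2A-2A$ with $2A'-2A'$ (so in particular $|2A'-2A'| \le Kn$ inside $\Z/N\Z$). This moves the problem into a setting where Fourier analysis is available.

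Inside $\Z/N\Z$, Bogolyubov's lemma --- a short Fourier calculation exploiting that $|2A'-2A'|$ is only $O_c(|A'|)$ --- yields a Bohr set $B(\Gamma, 1/4) \subset 2A'-2A'$ with $|\Gamma| \le d_0 = d_0(c)$. To turn this Bohr set into a proper GAP, I would apply Minkowski's second theorem to the lattice in $\Z^{|\Gamma|}$ generated by the vectors $(k\gamma \bmod N)_{\gamma \in \Gamma}$, extracting a proper GAP $P$ of dimension $O_c(1)$ and size $\Omega_c(N) = \Omega_c(n)$ contained in $B(\Gamma, 1/4)$, and hence in $2A'-2A'$.

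Finally, I would apply Ruzsa's covering lemma: since $|A' + P| \le |3A' - 2A'| = O_c(n) = O_c(|P|)$, there is a set $X$ with $|X| = O_c(1)$ and $A' \subset X + P - P$, and $X + P - P$ is itself a GAP of dimension $O_c(1)$ and size $O_c(n)$. Pulling back through $\phi^{-1}$, which respects GAP structure because it preserves the relevant sums, produces the desired GAP containing $A$ in $\Z$ and hence in $G$. The main obstacle I anticipate is the Bohr-set-to-GAP extraction: controlling both the dimension and the size of the proper GAP simultaneously requires a careful application of Minkowski's theorem, and this is where the final dependence of the constants on $c$ is really determined. The Freiman modelling step is also somewhat delicate --- one has to verify that a random dilation preserves $2$-isomorphism type with high probability --- but it follows a well-trodden template.
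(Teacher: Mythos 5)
The paper does not actually prove Theorem \ref{freiman}: it is imported as a black box from the cited works of Freiman and Ruzsa, so there is no internal argument to compare yours against. What you have written is the standard Ruzsa proof (Pl\"unnecke--Ruzsa, Freiman modelling into $\Z/N\Z$, Bogolyubov, Minkowski's second theorem to pass from a Bohr set to a proper GAP, then Ruzsa covering), and as an outline it is correct; the reduction from a torsion-free $G$ to $\Z$ via the finitely generated subgroup $\langle A\rangle\cong\Z^r$ and a generic projection is also the standard first step. One genuine inaccuracy to flag: Ruzsa's modelling lemma does not give a Freiman isomorphism of \emph{all} of $A$ onto a subset of a cyclic group of size $O_c(n)$, only of a subset $A''\subset A$ with $|A''|\ge |A|/k$. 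Consequently your final step, in which you cover $A'$ inside $\Z/N\Z$ and pull $X+P-P$ back through $\phi^{-1}$, would only cover the modelled subset. The standard fix is to reverse the order of the last two steps: first pull the proper GAP $P\subset 2A''-2A''$ back to a GAP $Q\subset 2A''-2A''\subset\Z$ of dimension $O_c(1)$ and size $\Omega_c(n)$ (which requires the modelling isomorphism to be of high enough order, say $8$ or more, so that it respects the arithmetic of $X+P-P$), and only then apply the covering lemma in $\Z$ to all of $A$, using $|A+Q|\le|3A-2A|=O_c(n)=O_c(|Q|)$. With that reordering the argument is the textbook proof and gives exactly the statement quoted in the paper.
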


Our final tool will be a result of Mei-Chu Chang \cite{Chang2003}, showing that, in general, one cannot have too many factorizations of a complex number that come from a generalized arithmetic progression.

\begin{theorem}[Chang, \cite{Chang2003}]\label{MCCfac}
Let $P \subset \mathbb{C}$ be a generalized arithmetic progression of dimension $d$ and size $n$. Then, for a given $\alpha \in \mathbb{C}$, there are at most $e^{O_d(\frac{\log n}{\log \log n})}$ factorizations $\alpha = \alpha_1\alpha_2$ where $\alpha_1,\alpha_2 \in P$. 
\end{theorem}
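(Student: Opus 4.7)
The plan is to reduce the count of factorizations to a divisor-counting problem in a ring containing $P$, and then apply an analogue of the classical integer bound $d(m) \le e^{O(\log m / \log \log m)}$.

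I would begin by writing $P = \{a_0 + \sum_{i=1}^d a_i v_i : 0 \le a_i < \ell_i\}$ with $\prod_i \ell_i \le n$, so that each element of $P$ lies in the ring $R = \mathbb{Z}[v_1, \ldots, v_d]$, with integer coordinates bounded by $n$ in the generating set $1, v_1, \ldots, v_d$. A factorization $\alpha = \alpha_1\alpha_2$ with $\alpha_i \in P$ is then a factorization in $R$ with both factors constrained to a bounded box of coordinates. The baseline is the rational case $P \subset \mathbb{Z}$: here $|\alpha_i| \le n^{O_d(1)}$, so the number of factorizations is at most $d(\alpha) \le e^{O_d(\log n / \log \log n)}$ directly from the classical divisor bound. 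The real work lies in extending this to general $v_i \in \mathbb{C}$.

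For the general case, I would split by the transcendence degree $r$ of $\mathbb{Q}(v_1, \ldots, v_d)$ over $\mathbb{Q}$. Taking a transcendence basis $\{t_1, \ldots, t_r\} \subset \{v_1, \ldots, v_d\}$, the ring $R$ embeds into a polynomial ring $L[t_1, \ldots, t_r]$ over an algebraic number field $L$, and a factorization in $R$ becomes a factorization of an element of bounded degree in the $t_i$ whose $L$-coefficients have bounded Weil height. The desired bound should then follow by combining two ingredients: a Dedekind-type divisor bound in $\mathcal{O}_L$ analogous to the classical estimate (using that each factor has logarithmic height of size at most $O_d(\log n)$), and a divisor bound for polynomials whose total degree in the $t_i$ is at most $\sum_i \log_2 \ell_i = O(\log n)$. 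Multiplying these together yields the target exponent $\log n / \log \log n$.

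The main obstacle I expect is controlling the unit ambiguity in $\mathcal{O}_L$, which is infinite whenever $L$ has positive unit rank, and simultaneously keeping all bounds uniform in the choice of $v_i$, since the degree of $L$ over $\mathbb{Q}$ is not bounded by a function of $d$ alone. The key leverage comes from the GAP coordinates themselves: an $\alpha_i \in P$ is determined by $d+1$ bounded integer coordinates, so two factorizations differing by a unit $u \in \mathcal{O}_L^\times$ must still satisfy that $u \alpha_i$ lies in the same coordinate box as $\alpha_i$; this pins the unit down up to a bounded ambiguity depending only on $d$. Making this quantitative, essentially via an $S$-unit/subspace-theorem style input, is the technical heart of the argument and is where I would expect to spend the bulk of the effort.
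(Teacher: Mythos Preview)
The paper does not prove Theorem~\ref{MCCfac}; it is quoted as a black-box result of Chang~\cite{Chang2003} and used only through Corollary~\ref{circpoints}. So there is no proof in the paper to compare your proposal against.

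That said, your sketch is broadly in the spirit of Chang's argument: one does reduce to divisor counting in a ring generated by the GAP steps, split off a transcendence basis so that factorizations become factorizations of polynomials over a number field, and then combine a polynomial divisor bound with an arithmetic divisor bound. You have also correctly identified the two genuine obstacles, namely the unbounded unit group of $\mathcal{O}_L$ and the fact that $[L:\mathbb{Q}]$ is not controlled by $d$. Where your outline is vaguest is exactly where the real content lies: invoking ``an $S$-unit/subspace-theorem style input'' is not yet an argument, and in fact Chang does not go through the subspace theorem here. The uniformity in $L$ and the unit ambiguity are handled by more direct height considerations together with the observation that both factors are constrained to lie in a fixed $(d{+}1)$-dimensional lattice box, which rigidifies the problem far more than a generic Dedekind-domain divisor count would. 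If you want to turn your sketch into a proof, that is the step to make precise; the rest is routine. (One small caveat: the statement should be read for $\alpha \neq 0$, since otherwise the bound is trivially false when $0 \in P$.)
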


From this, we can show that a circle can't have too many points from a generalized arithmetic progression.

\begin{cor}\label{circpoints}
Let $P \subset \mathbb{R}^2$ be a generalized arithmetic progression of dimension $d$ and size $n$. Then, a circle $S$ can contain at most $e^{O_d(\frac{\log n}{\log \log n})} = n^{o_d(1)}$ points of $P$.
\end{cor}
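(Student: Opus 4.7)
The plan is to identify $\mathbb{R}^2$ with $\mathbb{C}$ and reinterpret the condition ``$p$ lies on $S$'' as a factorization equation, so that Chang's theorem (Theorem \ref{MCCfac}) applies directly. Write $S$ as the set of $z \in \mathbb{C}$ with $|z - z_0| = r$. If $r = 0$ the circle is a single point and the bound is trivial, so assume $r \neq 0$. Since a translation only changes the base point of a GAP, $P - z_0$ is still a GAP of dimension $d$ and size $n$, and a point $p \in P$ lies on $S$ if and only if $(p - z_0)\overline{(p - z_0)} = r^2$.

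To invoke Chang's theorem I need a \emph{single} GAP that contains both $p - z_0$ and its complex conjugate. Write $P - z_0 = \{v_0 + \sum_{i=1}^d a_i v_i : 0 \le a_i < \ell_i\}$ with $\prod \ell_i = n$, and form the enlarged set
\[
R = \Bigl\{v_0 + c(\bar v_0 - v_0) + \sum_{i=1}^d a_i v_i + \sum_{i=1}^d b_i \bar v_i : c \in \{0,1\},\ 0 \le a_i,b_i < \ell_i \Bigr\}.
\]
This is a GAP in $\mathbb{C}$ of dimension $2d+1$ and size at most $2n^2$. Taking $c=0$, $b_i=0$ recovers $P - z_0$, while taking $c=1$, $a_i=0$ recovers $\overline{P - z_0}$, so $R$ contains both as required.

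Now each $p \in P \cap S$ produces the factorization $r^2 = (p - z_0) \cdot \overline{(p - z_0)}$ with both factors in $R$, and distinct choices of $p$ give distinct first factors and hence distinct factorizations. Applying Theorem \ref{MCCfac} to $\alpha = r^2$ inside $R$, the number of such factorizations is at most $e^{O_d(\log|R|/\log\log|R|)}$; since $|R| \le 2n^2$ and the dimension $2d+1$ depends only on $d$, this is $e^{O_d(\log n/\log\log n)} = n^{o_d(1)}$, giving the claimed bound on $|P \cap S|$.

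The one genuine choice in the argument is the construction of $R$; everything else is translation, conjugation, and an invocation of Chang's theorem. I do not expect any obstacle beyond writing down the right enlarged GAP, since the factorization viewpoint is the natural way to translate a circle in $\mathbb{C}$ into a multiplicative statement.
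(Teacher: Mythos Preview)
Your argument is correct and matches the paper's proof essentially line for line: translate the circle to the origin, view points on it as giving factorizations $z\bar z = r^2$, embed the translated GAP together with its complex conjugate into a single GAP of dimension $2d+1$ and size at most $2n^2$, and invoke Theorem~\ref{MCCfac}. The only difference is cosmetic: you spell out the enlarged GAP $R$ explicitly, whereas the paper simply asserts that $P \cup \overline{P}$ sits inside such a GAP.
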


\begin{proof}
By performing a translation, we may assume that $S$ has its center at the origin. Henceforth associating $(a,b) \in \mathbb{R}^2$ with $z= a +bi \in \mathbb{C}$, we can write $S$ in the form $z\overline{z} = R$. Thus, we may map each $\alpha \in S \cap P$ to a factorization $\alpha \overline{\alpha} = R$ from $P \cup \overline{P}$. We note two facts: $(1)$ this map is at most two-to-one, in particular, only $\alpha$ and $\overline{\alpha}$ can map to the same factorization, and $(2)$ $P \cup \overline{P}$ is contained in a generalized arithmetic progression of dimension at most $2d+1$ and size at most $2n^2$. Thus, using theorem \ref{MCCfac} to bound the number of such factorizations, we get that $|P \cap S| \le e^{O_d(\frac{\log n}{\log \log n})} = n^{o_d(1)}$
\end{proof}

\section{Proof of the main theorem}\label{mainproof}
In the proof of Theorem \ref{mainthm}, it will be useful to use the language of incidences.

\begin{defn}
    Let $P$ be a collection of points in $\mathbb{R}^2$, and $\mathcal{S}$ be a collection of subsets of $\mathbb{R}^2$. We let $I(P,\mathcal{S})$ be the set of tuples $(p,S) \in P \times \mathcal{S}$ such that $p\in S$. We refer to $|I(P,\mathcal{S})|$ as the number of incidences determined by $P$ and $\mathcal{S}$.
\end{defn}

Before we begin the proof of Theorem \ref{mainthm}, we will give a brief outline. To start, let $P$ be a set of points in the plane determining many unit distances. Using standard tools from incidence geometry, we can show that there must be many collections of points $q_1,q_2,q_3,q_4 \in P$ such that $q_i$ is connected to $q_{i+1}$ by a ``relatively short'' arc from a unit circle. We will think of $q_1,q_2,q_3,q_4$ as simply a path of length $3$. If we also insist that the unit distances determined by $P$ are from a relatively small set of directions, then we can show that these paths of length $3$ must also take a restricted form; moreover, we can show that the existence of each such path forces one of a small number of vector differences between two points of $P$.

The bulk of the proof lies in showing that there are many such paths and that the paths take a sufficiently nice form that we can count them efficiently. When we do this, however, we can then show that there are many copies of a relatively small set of vector differences within $P$. Using theorems \ref{bsg} and \ref{freiman}, this implies that a large portion of $P$ must be contained in a low-dimensional generalized arithmetic progression. However, this cannot happen; since $P$ determines many unit distances, there must be a circle containing many points of $P$, and this, together with Corollary \ref{circpoints}, generates a contradiction.

We can now begin with the proof of Theorem \ref{mainthm}.

\begin{proof}[Proof of Theorem \ref{mainthm}]
Suppose the contrary; that is, that $P$ determines at least $Cn^{4/3}$ unit distances from at most $C'n^{1/3}$ directions, for some large $n$. For the remainder of the proof, we consider only unit distances from these directions; that is, if we say that a collection of points determines a certain number of unit distances, we are counting only those that come from the $C'n^{1/3}$ directions. As such, for our purposes, a given point can determine at most $C'n^{1/3}$ unit distances with other points from $P$. We note, finally, that all constants implicit in this proof are allowed to depend on $C$ and $C'$. 

To start, we iteratively discard from $P$ those points that determine fewer than $(C/2)n^{1/3}$ unit distances. Since we can discard at most $n$ such points, the resulting set (which we will still call $P$) must still determine at least $(C/2)n^{4/3}$ unit distances. Since any individual point can determine at most $C'n^{1/3}$ unit distances with other points from $P$, we know $P$ is still of size at least $\frac{C}{2C'}n$.

Now, we note that there exists a set $S$ of at most $C'n^{1/3}$ points along the unit circle (corresponding to the directions) such that each element of the set $$\mathcal{S} := \{S + p : p \in P\}$$ determines at least $(C/2)n^{1/3}$ incidences with $P$. We let $s_1,\dots,s_m$ be the points of $S$, where $s_1$ is arbitrary, and the remaining indices are determined by the clockwise ordering around the circle. We say that an arc along the unit circle is \emph{good} if it connects two points $s_i$ and $s_j$, where the absolute value of $i-j$ modulo $m$ is at most $C_1$, for some (sufficiently large) constant $C_1 \in \mathbb{Z}^+$ to be specified later. We note that there are at most $C_1C'n^{1/3}$ good arcs.

Now, we apply Theorem \ref{polypart} to the pointset $P$ with $d = (C/32)n^{1/3}$ to get a polynomial $F$ of degree at most $d$ such that $Z(F)$ determines at most $O (n^{2/3})$ cells, where each cell contains $O(n^{1/3})$ points from $P$. We discard, now, any element of $\mathcal{S}$ that's contained in a circle contained entirely within $Z(F)$. $F$ is degree at most $(C/32)n^{1/3}$, and can thus contain at most $(C/64)n^{1/3}$ circles. Thus, after this step we still have $|\mathcal{S}| \ge \frac{C}{2C'}n - \frac{C}{64}n^{1/3} \ge \frac{C}{4C'}n$.

Now, let $a$ be a good arc, and $e = p+a$ for some $p \in P$. We say that $e$ is a \emph{good edge} if 

\begin{enumerate}
    \item the endpoints of $e$ are points $p_1,p_2 \in P$
    \item $e$ contains no points of $(p+S) \cap P$ except $p_1,p_2$.
    \item $e$ is contained entirely on the interior of a cell determined by $Z(F)$
\end{enumerate}
Our first claim says that there are many such edges.
\\\\
{\bfseries Claim 1:} There are at least $\frac{C^2}{32C'}n^{4/3}$ good edges.

\begin{proof}Consider an arbitrary element $p + S \in \mathcal{S}.$ We know that $p+S$ is contained in a unit circle that is not contained entirely within $Z(F)$; therefore, by Theorem \ref{bezout} we know that this circle intersects $Z(F)$ in at most $(C/16)n^{1/3}$ points. As noted before, each element of $\mathcal{S}$ determines at least $(C/2)n^{1/3}$ incidences with $P$. Let $i_1,\dots,i_k \subset [m]$ be the set of indices (ordered smallest to largest) such that $p+s_{i_j} \in P$ for all $1 \le j \le k$, and note $k \ge (C/2)n^{1/3}$. At most $(C'/C_1)n^{1/3}$ of these indices $i_j$ can be such that $i_{j+1} - i_j$ (where for $j = k$, we take $i_1- i_k$ modulo $m$) is greater than $C_1$ . Thus, letting $C_1 = (4C'/C)$, we conclude that there are at least $(C/4)n^{1/3}$ indices $i_j$ such that $p+s_{i_j}$ and $p+s_{i_{j+1}}$ are connected by translates of good arcs. Finally, we note that, by theorem \ref{bezout}, at most $(C/8)n^{1/3}$ of these good arcs can have an intersection with $Z(F)$. Thus, for every element of $\mathcal{S}$, there exist at least $(C/8)n^{1/3}$ of these pairs where their arcs are entirely in-cell and are thus connected by good edges. Since $|\mathcal{S}| \ge \frac{C}{4C'}n$, there are a total of at least $\frac{C^2}{32C'}n^{4/3}$ good edges.

\end{proof}

Now, let $C_2 >0$ be a small constant to be chosen later. We refer to a cell determined by $Z(F)$ as good if there are at least $C_2n^{2/3}$ good edges contained in its interior.
\\\\
{\bfseries Claim 2:} At least $\Omega (n^{2/3})$ cells determined by $Z(F)$ are good.
\begin{proof}
We discard cells that contain fewer than  $C_2n^{2/3}$ good edges. Note that since there are $O (n^{2/3})$ cells, we lose $O (C_2n^{4/3})$ good edges in this process. Letting $C_2$ be sufficiently small and using Claim $1$, we will still have $\frac{C^2}{64C'}n^{4/3}$ good edges at the end of this process. A given cell can contain $O (n^{1/3})$ points and thus $O (n^{2/3})$ good edges. Therefore, there must be $\Omega (n^{2/3})$ good cells.
\end{proof}
We define an \emph{in-cell self-intersecting $P_3$} to be a path of length $3$ consisting of three good edges contained entirely within a given cell (see Figure \ref{p_3_fig}).
\\\\{\bfseries Claim 3:} In a good cell there are $\Omega (n^{4/3})$ in-cell self-intersecting $P_3$'s.

\begin{figure}

    \centering
    \includegraphics[scale=.5]{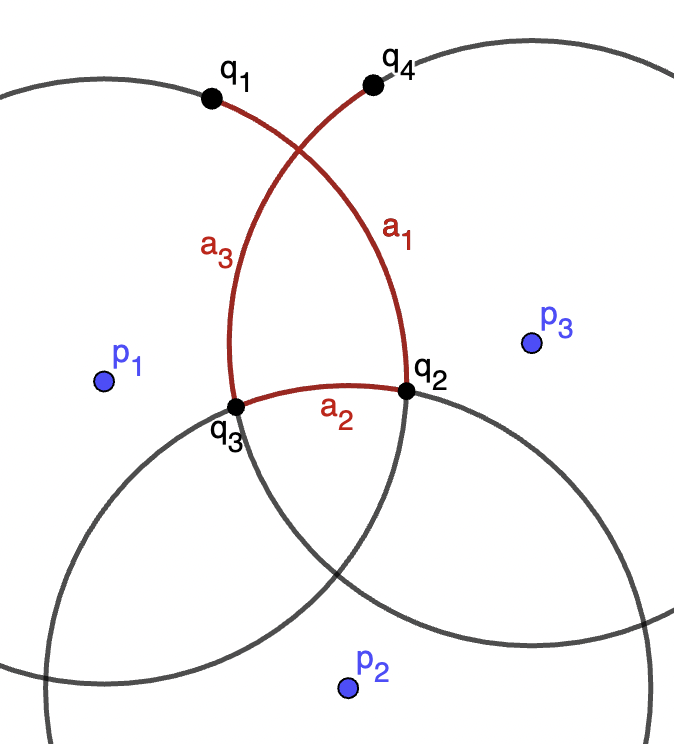}
    \caption{A self-intersecting $P_3$ consisting of points $q_1,q_2,q_3,q_4$. Note that the vector difference $p_1-p_3$ is uniquely determined by the choice of arcs $a_1,a_2,a_3$.}
    \label{p_3_fig}
\end{figure}

\begin{proof}
Consider a good cell $D$ with in-cell points $P_D$. We note $|P_D| = O (n^{1/3})$ by theorem \ref{polypart}. Since $|P_D|$ also contains at least  $C_2n^{2/3} = \Omega (n^{2/3})$ good edges, and each pair of points can be connected by at most $2$ good edges, we must in fact have $|P_D| = \Theta (n^{1/3})$. Now, we let $G_D$ be the graph obtained by connecting points of $P_D$ that are connected by good edges. As noted before, a given pair of points is connected by at most two good edges, so this graph must have $\Omega (n^{2/3})$ edges. By Theorem \ref{KST}, it must contain $\Omega (n^2)$ copies of $K_{3,3}$. Since $K_{3,3}$ is non-planar, any copy of $K_{3,3}$ in $G_D$, when considered as a geometric graph with edges given by the good edges connecting its points, must contain two crossing edges $e_1 = (q_1,q_2)$ and $e_3 = (q_3,q_4)$. Since $K_{3,3}$ is complete bipartite, $q_2$ and $q_3$ must be connected by a good edge $e_2$; thus, any copy of $K_{3,3}$ contains a self-intersecting in-cell $P_3$ as a subgraph. Furthermore, for any self-intersecting in-cell $P_3$ consisting of points $q_1,q_2,q_3,q_4$, there are at most $O(n^{2/3})$ pairs of points in $P_D \setminus \{q_1,q_2,q_3,q_4\}$ with which we could construct a copy of $K_{3,3}$. Thus, any self-intersecting $P_3$ can be counted in this way at most $O (n^{2/3})$ times. We conclude there are at least $\Omega (n^{4/3})$ in-cell self-intersecting $P_3$'s in $D$.
\end{proof}

By claims $2$ and $3$, there are $\Omega (n^{2/3})$ good cells and each contains $\Omega(n^{4/3})$ self-intersecting $P_3$'s. Thus, there are a total of $\Omega (n^2)$ in-cell self-intersecting $P_3$'s. 

Now, each in-cell self-intersecting $P_3$ gives points $p_1,p_2,p_3,q_1,q_2,q_3,q_4 \in P$ such that $q_1,q_2,q_3,q_4$ are connected by an in-cell self-intersecting $P_3$ consisting of translates of good arcs $a_1,a_2,a_3$ lying on unit circles centered at $p_1,p_2,p_3$ (respectively). See Figure \ref{p_3_fig} for reference. Let $H \subset P \times P$ consist of pairs of points $(p_1,p_3)$ for which there exist such a self-intersecting $P_3$; we claim $|H| = \Omega(n^2)$. Since there are $\Omega(n^2)$ self-intersecting $P_3$'s, we need only show that each pair $p_1,p_3$ is counted in this way at most $O(1)$ times.

Fix such a pair $p_1,p_3$. We observe first that the unit circles centered at $p_1$ and $p_3$ can intersect at most two points. If an intersection point is on the boundary of $Z(F)$, then no in-cell self-intersecting $P_3$ can come from this intersection. If an intersection is in-cell, then by property (b) of good edges, there is a unique choice of points $q_1,q_2 \in (p_1 + S) \cap P$ and $q_3,q_4 \in (p_2 + S) \cap P$ that can form good edges containing the intersection point. Finally, there will be most $8$ choices of $p_2$. Thus, $p_1,p_3$ can be counted in at most $16$ self-intersecting $P_3$'s.

To finish the proof, note that the vector difference $p_1 - p_3$ is uniquely determined by the choice of good arcs $a_1,a_2,a_3$ determining the self-intersecting $P_3$. Since, as noted before, there are only $C_1C'n^{1/3}$ good arcs, there $O(n)$ possible such vector differences. Thus, $|P-_H P| = O(n)$. Using this, we can apply Theorem \ref{bsg} to conclude there exists a set $P' \subset P$ such that $|P'| = \Omega (n)$ and $|P'-P'| = O (n)$. By Theorem \ref{freiman}, we observe that $P'$ must be contained in a generalized arithmetic progression of dimension $O(1)$ and size $O(n)$.

Recall that each point in $P$ determines at least $(C/2)n^{1/3}$ unit distances with other points of $P$; thus, the points of $P'$ must determine $\Omega(n^{4/3})$ unit distances with other points of $P$. By a simple double-counting argument, a point in $P$ must exist that determines $\Omega(n^{1/3})$ unit distances with points of $P'$. Equivalently, a unit circle must contain $\Omega(n^{1/3})$ points from $P'$. However, this is a contradiction by theorem \ref{circpoints} since $P'$ is contained in a generalized arithmetic progression of dimension $O(1)$ and size $O(n)$. This completes the proof.
\end{proof}

\section{Concluding Remarks} We showed that assuming many unit distances parallel to a small set of directions forces a strong arithmetic structure; that is, it guarantees many points are from a low dimensional generalized arithmetic progression. Then, according to Chang's result, we conclude that such pointsets have a small intersection with any circle. It is possible that one can always find strong arithmetic structure in any point set with many unit distances. 
Under the stronger assumption of few distinct distances, Erd\H{o}s conjectured that a planar pointset ''\textit{has lattice structure}" \cite{ERD2}. If true, and lattice structure meant generalized arithmetic progression, then it would imply an alternative proof for the result of Guth and Katz on distinct distances \cite{GuthKatz15}. The strongest result related to Erd\H{o}s' question is in \cite{LuSeZe}, but no better structural result is known. 

\section{Acknowledgements}

The authors thank Sean Eberhard for suggesting using Chang's result, and to Greg Martin for helpful discussions. The second-named author's research was supported by an NSERC Discovery grant and OTKA K grant no. 133819.

\bibliographystyle{amsplain}
\bibliography{bibliography}

\end{document}